\documentclass[onecolumn,10pt]{article}
\usepackage[top=.75in, bottom=.75in, left=.75in, right=.75in]{geometry}
\setlength\parindent{0.25in}
\setlength{\columnsep}{.25in}
\usepackage{amsmath,amsfonts,amscd,amssymb,amsthm,bbm,bm}
\usepackage{graphicx}
\usepackage{epstopdf}
\usepackage{overpic}
\usepackage{cancel}
\usepackage{rotating}
\usepackage{url}
\usepackage{caption}
\usepackage{color}
\usepackage{rotating}
\usepackage{multirow}
\usepackage{wrapfig}
\usepackage{mathtools}
\usepackage{subeqnarray}
\usepackage{setspace}
\usepackage{palatino} 
\usepackage{microtype}
\setlength{\parskip}{0.4\baselineskip}
\usepackage[numbers,sort&compress]{natbib}

\usepackage[bottom,flushmargin,hang,multiple]{footmisc}
\usepackage{lipsum}
\usepackage{hyperref}
\hypersetup{
  colorlinks=true,   
  linkcolor=blue,    
  citecolor=blue,    
  urlcolor=blue,     
  pdftitle={the Waring Problem of Harmonic Polynomials},  
  pdfauthor={Hualin Huang, Yilun Tang, Yu Ye, Rongmin Zhu} 
}

\definecolor{header1}{cmyk}{0,0,0,1}

\DeclareGraphicsRule{.tif}{png}{.png}{`convert #1 `dirname #1`/`basename #1 .tif`.png}

\def \dim {\operatorname{dim}}

\def \R {\mathbbm{R}}

\numberwithin{equation}{section}

\theoremstyle{definition}
\newtheorem{theorem}{Theorem}[section]         
\newtheorem{definition}[theorem]{Definition}   
\newtheorem{proposition}[theorem]{Proposition} 
\newtheorem{lemma}[theorem]{Lemma}             
\newtheorem{corollary}[theorem]{Corollary}    
\newtheorem{example}[theorem]{Example}         
\newtheorem{remark}[theorem]{Remark}           
\newtheorem{algorithm}[theorem]{Algorithm}     
\newtheorem*{theorem*}{Main Theorem}

\setcounter{totalnumber}{50}
\setcounter{topnumber}{50}
\setcounter{bottomnumber}{50}
\setcounter{dbltopnumber}{50}

\usepackage[utf8]{inputenc}

\usepackage[normalem]{ulem}
\usepackage{color}

\usepackage{enumitem}

\title{\vspace{-.125in}{\huge\selectfont \textbf{The Waring Problem of Harmonic Polynomials}}\vspace{-.075in}}

\author{\normalsize{Hua-Lin Huang$^{1}$, Yilun Tang$^{1}$, Yu Ye $^{2}$, and Rongmin Zhu$^{1}$}\\
\footnotesize{$^1$ School of Mathematical Sciences, Huaqiao University, Quanzhou 362021, China} \\
\footnotesize{$^{2}$ School of Mathematical Sciences, University of Science and Technology of China, Hefei 230026, China} \\
\footnotesize{and} \\
\footnotesize{Hefei National Laboratory, University of Science and Technology of China, Hefei 230088, China}
}
\date{}

\begin{document}
\maketitle
\vspace{-.2in}
\begin{abstract}
This paper investigates the Waring problem of harmonic polynomials. By characterizing the annihilating ideal of a homogeneous harmonic polynomial, i.e., a real binary form that is in the kernel of the Laplacian, we show that its Waring rank equals its degree. Moreover, we show that any linear form can appear in a minimal Waring decomposition of a homogeneous harmonic polynomial, implying that the forbidden locus is  empty. We also provide an explicit algorithm for computing the minimal Waring decompositions.
\end{abstract}

\section{Introduction}
This paper studies the Waring problem of polynomials over the real field \(\mathbb{R}\). All subsequent discussion takes place in this setting unless otherwise stated. We adopt the notational conventions of \cite{Cox2015, IarrobinoKanev1999}. Let \(\mathbb{R}^* = \mathbb{R} \setminus \{0\}\) denote the set of nonzero real numbers. 

Let \(\mathcal{R} = \mathbb{R}[x,y]\) be the ring of real binary polynomials, and \(\mathcal{R}_d\) be the space of all real binary homogeneous polynomials of degree \(d\) in \(\mathcal{R}\). 
An element in \(\mathcal{R}_d\)  is called a \emph{$d$-form}, and a $1$-form is usually called a \emph{linear form}. For a $d$-form 
\(f\), a representation of as follows
\[
f = \sum_{j=1}^{r} \lambda_j L_j^d,
\]
with linear forms \(L_j\) and real numbers \(\lambda_j\), is called a \emph{Waring decomposition} of \(f\). The smallest positive integer \(r\) for which such a decomposition exists is the  \emph{Waring rank} of \(f\), denoted \(\operatorname{WR}(f)\).  A decomposition achieving this minimal \(r\) is called a \emph{minimal Waring decomposition}.

Let \(\mathcal{D} = \mathbb{R}[\partial_x, \partial_y]\) be the ring of constant-coefficient differential operators, and \(\mathcal{D}_r\) its subspace of homogeneous differential operators of degree \(r\). For any nonnegative integers \(i, j, k, l\), the action of the differential operator \(\partial_x^i\partial_y^j \in \mathcal{D}\) on the monomial \(x^k y^l \in \mathcal{R}\) is defined as
\[
\partial_x^i\partial_y^j \circ x^k y^l =
   \begin{cases}
    \frac{k!}{(k-i)!}\frac{l!}{(l-j)!}x^{k-i}y^{l-j}, & \text{if } i \leq k \text{ and } j \leq l; \\
     0,                      & \text{otherwise}.
   \end{cases}
\] 
Let $f \in \mathcal{R}$ and $\mathfrak{g} \in \mathcal{D}$. If \(\mathfrak{g} \circ f=0,\) then \(\mathfrak{g}\) is called an \textit{ annihilating polynomial of \(f\)}. We call \(f^{\perp} := \left\{ \mathfrak{g} \in \mathcal{D} \mid \mathfrak{g}\circ f = 0  \right\}\)  \textit{the annihilating ideal of \(f\)}. 

Let \(\Delta = \partial_x^2 + \partial_y^2\) denote the Laplace operator. A polynomial \(f \in \mathcal{R}\) is called \emph{harmonic} if it satisfies \(\Delta \circ f = 0\). Apparently, a polynomial is harmonic if and only if so is each of its homogeneous components. Harmonic polynomials are ubiquitous in algebra, analysis and arithmetic, see e.g. \cite{ABR, Reznick1996}.

\subsection{Problem Statement}
The Waring problem of polynomials is a classical and fascinating research direction in algebraic geometry and invariant theory, centered on whether complex mathematical objects can be decomposed into sums of simpler ones. While the Waring problem for generic polynomials over the complex field has been resolved by the celebrated Alexander-Hirschowitz theorem \cite{Hirschowitz1995}, the case over the real field remains far from complete—even for binary forms, there is a lot to be done. Since a general solution to the Waring problem of real binary forms currently appears unattainable, it is of merit to study specific families. 

To compute the Waring decomposition of a binary form, first one typically factorizes its annihilating polynomial into a product of pairwise non-proportional linear differential operators, and then applies the Apolarity Lemma (Lemma \ref{2}) to obtain the decomposition. However, according to the Fundamental Theorem of Algebra, polynomials over the complex field always split into linear factors, whereas those over the real field may contain irreducible quadratic factors, obstructing the direct application of the Apolarity Lemma in the real setting. As a nontrivial step forward, it seems natural to start with those annihilated by an irreducible quadratic differential operators. Notably, any irreducible quadratic differential operator can be transformed into the Laplacian via an invertible linear change of variables. Therefore, this paper focuses on the Waring problem of harmonic polynomials.

\subsection{Related Work}
The waring problem of complex binary forms can be traced back to the pioneering work of Sylvester \citep{Sylvester, S2} and Gundelfinger \citep{Gundelfinger1887}. Comas and Seiguer \citep{Comas2011} proposed a concise algorithm for determining the Waring rank and border rank of any complex binary form with modern language and machinery \cite{IarrobinoKanev1999}. Two authors of the present paper provided an elementary treatment for the Waring problem of complex binary forms \cite{Huang2025}. The Waring problem of real binary forms has been systematically initiated only in recent years by Comon, Ottaviani and Reznick, etc. Typical ranks of general real binary forms were considered in \citep{Comon2012, B} and completely resolved by Blekherman \citep{Blekherman2015}. The relationship between real roots and Waring rank for real binary forms was known to Sylvester \citep{S1865} and was further revealed in \citep{Comon2012, Reznick2013, CR, BS}. The computational aspect of the problem were considered in \citep{ADCZ, BSt}. To the best knowledge of the authors, the Waring decompositions for specific real binary forms were considered only for monomials \citep{BCG} and some binomials \citep{MM}.

\subsection{Methods and Results}
We pursue a real analogue of the previous work \cite{Huang2025} on complex binary forms. This paper presents an elementary and constructive approach to the Waring problem of harmonic polynomials, using only tools from linear algebra and calculus, without recourse to algebraic geometry. We first give a self-contained proof of the real Apolarity Lemma, characterizing Waring decompositions. Next, by examining the structure of homogeneous harmonic polynomials,  we determine the two generators of their annihilating ideal, see Proposition \ref{4}. Finally, we prove the main theorem, stating that the Waring rank of a homogeneous harmonic polynomial equals its degree and any linear form can appear in a minimal decomposition. We remark that, in \cite{Bernardi2025, Carlini2017} the authors consider the Waring locus of a form which are those linear forms that can occur in its minimal decompositions. The complement of a Waring locus is called a forbidden locus. In the literature, the first and the only example of empty forbidden locus was found by Flavi \cite{Flavi2024} very recently for the form \(\left(x_1^2+x_2^2+x_3^2\right)^3\). Now the infinite family of Harmonic forms are added to the list of forms with empty forbidden loci.

The main results can be summarized as follows, see Theorem \ref{8}.

\begin{theorem*}
The Waring rank of a nonzero harmonic polynomial of degree \(d\) is exactly \(d,\) and any linear form can appear in one of its minimal Waring decompositions.
\end{theorem*}

Based on this theorem and its proof, this paper further proposes a concrete algorithm for computing the minimal Waring decompositions of homogeneous harmonic polynomials, see Algorithm \ref{Algo}. In addition, we solve the Waring problem of all forms annihilated by quadratic differential operators completely as a by-product, see Theorem \ref{aq}.

\subsection{Organization of the Paper}
The paper is structured as follows. Section \(2\) establishes the existence of Waring decompositions and provides an elementary proof of the Apolarity Lemma, setting up the equivalent conditions for Waring decompositions. Section \(3\) analyzes the structure of homogeneous harmonic polynomials, determines their annihilating ideals and Waring ranks, and presents an algorithm for computing their minimal Waring decompositions.

\section{Preliminaries}
This section recalls the existence of Waring decompositions of binary forms and the Apolarity Lemma that were explained fully in \cite{Ellison1969, Reznick2013, Huang2025, IarrobinoKanev1999}.

\subsection{Existence of Waring Decompositions}

\begin{lemma}\label{5}
Let \(\alpha_1x+\beta_1y, \ \dots, \ \alpha_{d+1}x+\beta_{d+1}y\) be \(d+1\) pairwise non-proportional linear forms. Then their \(d\)-th powers  
\[
(\alpha_1x+\beta_1y)^d, \dots, (\alpha_{d+1}x+\beta_{d+1}y)^d
\]
form a basis of \(\mathcal{R}_d\). In particular, every \(f \in \mathcal{R}_d\) can be expressed as  
\[
f = \sum_{i=1}^{d+1} \lambda_i (\alpha_i x + \beta_i y)^d
\]
for some real numbers \(\lambda_1, \dots, \lambda_{d+1}\). That is, \(f\) admits a Waring decomposition.
\end{lemma}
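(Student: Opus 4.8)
The plan is to reduce the statement to a single nonvanishing-determinant computation. First I would record that \(\mathcal{R}_d\) has dimension \(d+1\), since the monomials \(x^d, x^{d-1}y, \dots, y^d\) form a basis. Consequently it suffices to prove that the \(d+1\) power forms are linearly independent: having exactly the right cardinality, they will then automatically span \(\mathcal{R}_d\), and the ``in particular'' clause asserting the existence of a Waring decomposition follows by expressing any \(f\) in this basis.

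Next I would expand each power via the binomial theorem,
\[
(\alpha_i x + \beta_i y)^d = \sum_{k=0}^{d} \binom{d}{k}\,\alpha_i^{d-k}\beta_i^{k}\, x^{d-k}y^{k},
\]
so that the coordinate row of \((\alpha_i x + \beta_i y)^d\) in the monomial basis has entries \(\binom{d}{k}\alpha_i^{d-k}\beta_i^{k}\). Since the binomial coefficients are nonzero and depend only on the column index, linear independence of the \(d+1\) power forms is equivalent to the nonvanishing of the determinant of the \((d+1)\times(d+1)\) matrix \(M = \big(\alpha_i^{d-k}\beta_i^{k}\big)_{1\le i\le d+1,\;0\le k\le d}\).

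I would then evaluate this determinant using the bihomogeneous Vandermonde identity
\[
\det M = \pm \prod_{1\le i<j\le d+1}\big(\alpha_i\beta_j - \alpha_j\beta_i\big),
\]
which I would justify by a standard divisibility-and-degree argument: \(\det M\) vanishes whenever rows \(i\) and \(j\) satisfy \(\alpha_i\beta_j=\alpha_j\beta_i\), so each pairwise-coprime factor \(\alpha_i\beta_j-\alpha_j\beta_i\) divides it, and a degree count (both sides are homogeneous of degree \(d\) in each pair \((\alpha_i,\beta_i)\) and of total degree \(d(d+1)\)) forces equality up to a scalar, which one pins down as \(\pm 1\) by inspecting a single diagonal term. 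Each factor \(\alpha_i\beta_j - \alpha_j\beta_i\) is precisely the condition that the linear forms \(\alpha_i x+\beta_i y\) and \(\alpha_j x + \beta_j y\) be proportional; by hypothesis they are pairwise non-proportional, so every factor is nonzero and hence \(\det M \neq 0\), completing the proof.

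The main obstacle, and the reason I would favor the bihomogeneous form of the determinant over the classical Vandermonde, is the degenerate case in which some form is proportional to \(y\) (so \(\alpha_i=0\)) or to \(x\) (so \(\beta_i=0\)). If one instead dehomogenized by setting \(t_i=\beta_i/\alpha_i\) to recover the usual Vandermonde determinant \(\prod_{i<j}(t_j-t_i)\), this substitution breaks down at exactly such forms. That route can still be carried through by treating the at-most-one form proportional to \(y\) and the at-most-one proportional to \(x\) as separate cases, but the bihomogeneous identity dispatches all configurations uniformly and is cleaner; the only remaining care is fixing the sign in the factorization, which is routine and irrelevant to nonvanishing.
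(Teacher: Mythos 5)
Your proof is correct and follows essentially the same route as the paper: both express the powers in the monomial basis and reduce linear independence to the nonvanishing of a generalized (bihomogeneous) Vandermonde determinant, which is nonzero precisely because the points \([\alpha_i:\beta_i]\in\mathbb{P}^1\) are distinct. The only difference is one of detail: the paper merely asserts that the matrix is ``of Vandermonde type'' with nonzero determinant, whereas you supply the factorization \(\pm\prod_{i<j}(\alpha_i\beta_j-\alpha_j\beta_i)\) together with its divisibility-and-degree justification, which also handles the degenerate forms proportional to \(x\) or \(y\) uniformly.
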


\begin{proof}
Consider the transition matrix from the monomial basis \(\{x^d, \ x^{d-1}y, \ \dots, \ y^d\}\) of \(\mathcal{R}_d\) to the set  
\[
\{(\alpha_1x+\beta_1y)^d, \ (\alpha_2x+\beta_2y)^d, \ \dots, \ (\alpha_{d+1}x+\beta_{d+1}y)^d\}.
\]
This matrix is of Vandermonde type. Since the linear forms are pairwise non-proportional, the corresponding points \([\alpha_i : \beta_i]\) in \(\mathbb{P}^1\) are distinct, and hence the determinant of the matrix is non-zero.  
Hence the \(d\)-th powers are linearly independent, and thus form a basis of \(\mathcal{R}_d\).
\end{proof}

\subsection{The Apolarity Lemma}
We recall the Apolarity Lemma, a fundamental result in the study of Waring decompositions. 

\begin{lemma}\label{2}
Let \( f \in \mathcal{R}_d\), and let \(\alpha_1 x+\beta_1 y,\,\cdots,\, 
\alpha_r x+\beta_r y \) be \( r \) pairwise non-proportional linear forms, 
where \(1\leq r\leq d+1.\) Then the following two statements are equivalent:
\begin{enumerate}[label={(\roman*)}, font=\normalfont]
\item\label{enum:item1}
There exist nonzero real numbers \(\lambda_1,\,\cdots,\,\lambda_r\) such that 
\[
f = \sum_{i=1}^r \lambda_i ( \alpha_ix + \beta_i y)^d.
\]
\item\label{enum:item2} 
\( \prod_{i=1}^{r} \left(\beta_i\partial_x-\alpha_i\partial_y\right) \circ f = 0 \)
and \( \prod_{i \neq k} \left(\beta_i\partial_x-\alpha_i\partial_y\right) \circ f \neq  0\) for all \( 1\leq k \leq r \).
\end{enumerate}
\end{lemma}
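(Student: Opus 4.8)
The plan is to build everything on one elementary computation: for any linear form $\alpha x + \beta y$ and any index $j$,
\[
(\beta_j\partial_x - \alpha_j\partial_y)\circ(\alpha x+\beta y)^d = d\,(\alpha\beta_j - \alpha_j\beta)\,(\alpha x+\beta y)^{d-1}.
\]
Writing $D_j := \beta_j\partial_x-\alpha_j\partial_y$, this shows that $D_j$ annihilates $(\alpha_j x+\beta_j y)^d$ and sends $(\alpha_i x+\beta_i y)^d$ to a \emph{nonzero} multiple of $(\alpha_i x+\beta_i y)^{d-1}$ whenever the two linear forms are non-proportional, the scalar $\alpha_i\beta_j-\alpha_j\beta_i$ being their $2\times 2$ determinant. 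I would record this identity first, together with the observation that the operators $D_1,\dots,D_r$ pairwise commute.

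For \ref{enum:item1} $\Rightarrow$ \ref{enum:item2}, I would apply the full product $P := \prod_{i=1}^r D_i$ to $f = \sum_i \lambda_i(\alpha_i x+\beta_i y)^d$. Since the $D_i$ commute and each summand is killed by its own factor, we get $P\circ f = 0$. For the non-vanishing assertion, I apply $\prod_{i\neq k}D_i$ instead: every summand with index $\neq k$ is annihilated, leaving $\lambda_k$ times a product of $r-1$ nonzero determinant scalars times $(\alpha_k x+\beta_k y)^{d-r+1}$. Because $\lambda_k\neq 0$ and the hypothesis $r\leq d+1$ guarantees $d-r+1\geq 0$, this is a nonzero form.

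For \ref{enum:item2} $\Rightarrow$ \ref{enum:item1}, the idea is a dimension count for the kernel of $P$ viewed as a linear map $\mathcal{R}_d\to\mathcal{R}_{d-r}$. On one hand, the $r$ powers $(\alpha_i x+\beta_i y)^d$ all lie in $\ker P$ and are linearly independent by Lemma \ref{5}, so $\dim\ker P\geq r$. On the other hand, each factor $D_j$ is a surjective map $\mathcal{R}_k\to\mathcal{R}_{k-1}$ (a nonzero first-order operator, which after a linear change of variables becomes $\partial_x$, manifestly surjective), hence the composition $P$ is surjective and $\dim\ker P = (d+1)-(d-r+1)=r$. Therefore $\ker P$ is exactly the span of the $r$ powers. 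The first condition in \ref{enum:item2} places $f\in\ker P$, so $f=\sum_i\lambda_i(\alpha_i x+\beta_i y)^d$ for some reals $\lambda_i$; the second condition, via the same computation as above, forces each $\lambda_k\neq 0$.

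I expect the main obstacle to be the surjectivity step in \ref{enum:item2} $\Rightarrow$ \ref{enum:item1}: one must argue cleanly that $\dim\ker P = r$ rather than merely $\geq r$. The reduction of each $D_j$ to $\partial_x$ by a change of coordinates, together with the compatibility of the apolar action with such changes, is where care is needed; alternatively one can show directly that the kernel of a nonzero first-order operator on $\mathcal{R}_k$ is one-dimensional (spanned by the corresponding power) and then chain these facts across the $r$ factors. The constraint $r\leq d+1$ is exactly what keeps all exponents nonnegative throughout, and it is used in both directions.
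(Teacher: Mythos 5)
Your proposal is correct, but for the key direction \ref{enum:item2} $\Rightarrow$ \ref{enum:item1} it takes a genuinely different route from the paper. The paper extends the given $r$ forms to $d+1$ pairwise non-proportional ones, expands $f$ in the resulting basis of $d$-th powers (Lemma \ref{5}), applies the product operator to this expansion, and then invokes Lemma \ref{5} a second time---in degree $d-r$---to force the extra coefficients $\lambda_{r+1},\dots,\lambda_{d+1}$ to vanish; everything stays inside explicit computations with powers of linear forms. You instead run a rank--nullity argument: each $D_j=\beta_j\partial_x-\alpha_j\partial_y$ is surjective as a map $\mathcal{R}_k\to\mathcal{R}_{k-1}$ (after a linear change of variables it becomes a nonzero multiple of a coordinate derivative), so the composition $P=\prod_{i=1}^r D_i$ is surjective and $\dim\ker P=(d+1)-(d-r+1)=r$; since the $r$ powers $(\alpha_i x+\beta_i y)^d$ lie in $\ker P$ and are linearly independent (by Lemma \ref{5}, strictly speaking after extending to $d+1$ forms and taking a subset of the resulting basis), they span $\ker P$, so the first condition in \ref{enum:item2} already yields the decomposition, and the nonvanishing of each $\lambda_k$ is extracted by the same computation both proofs use. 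What your route buys is the sharper structural fact $\ker P=\langle(\alpha_1x+\beta_1y)^d,\dots,(\alpha_rx+\beta_ry)^d\rangle$, which is the apolarity correspondence in its usual kernel form, and it avoids re-expanding $f$ over an enlarged set of forms; the price is the surjectivity lemma for first-order operators, which you rightly identify as the point needing care---the change-of-variables argument (or the equivalent statement that the kernel of $D_j$ on $\mathcal{R}_k$ is one-dimensional, spanned by $(\alpha_jx+\beta_jy)^k$) settles it, and it handles the boundary case $r=d+1$ correctly since $\mathcal{R}_{-1}=0$. The forward direction \ref{enum:item1} $\Rightarrow$ \ref{enum:item2} is the same direct computation that the paper leaves implicit, and both arguments consume the hypothesis $r\le d+1$ in the same way, through the nonnegativity of the exponent $d-r+1$.
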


\begin{proof}
\ref{enum:item1} $\Rightarrow$ \ref{enum:item2}: By direct computation. 

\ref{enum:item2} $\Rightarrow$ \ref{enum:item1}:
Extend the given \( r\) linear forms to a set of $d+1$ pairwise non-proportional ones:
\[ \alpha_1 x+\beta_1 y,\,\cdots,\, 
\alpha_{d+1} x+\beta_{d+1} y. \] 
By Lemma \ref{5}, there exist real numbers \(\lambda_1,\,\cdots,\,\lambda_{d+1} \) 
such that \[f = \sum_{j=1}^{d+1} \lambda_j (\alpha_jx+\beta_jy)^d.\]

Now consider the explicit action of the product \[\begin{aligned}
0=&\prod_{i=1}^{r} \left(\beta_i\partial_x-\alpha_i\partial_y\right) \circ f\\
=&\prod_{i=1}^{r} ( \beta_i\partial_x - \alpha_i\partial_y) \circ
\sum_{j=1}^{d+1} \lambda_j (\alpha_jx+\beta_jy)^d \\
=&\sum_{j=1}^{r}0+\sum_{j=r+1}^{d+1}\lambda_j\frac{d!}{(d-r)!}
\left(\prod_{i=1}^{r}\left(\beta_i\alpha_j-\alpha_i\beta_j\right)\right)(\alpha_jx + \beta_j y)^{d-r}.
\end{aligned}
\]
Due to the pairwise non-proportional assumption, we have $\beta_i\alpha_j - \alpha_i\beta_j \ne 0$ for all $i \neq j$. Again by Lemma \ref{5} with $d$ replaced by $d-r$, we deduce that \(\lambda_j=0\) for $j=r+1, \ \dots, \ d+1.\) Hence, $f$ can be expressed as \[f = \sum_{j=1}^{r} \lambda_j ( \alpha_jx + \beta_j y)^d.\]

Then by the further assumption
\[\begin{aligned}
   0 \ne &\prod_{i\neq k} \left(\beta_i\partial_x-\alpha_i\partial_y\right) \circ
\sum_{j=1}^{r} \lambda_j ( \alpha_jx + \beta_j y)^d \\
=&\sum_{j=1}^{r}\lambda_j\frac{d!}{(d-r+1)!}
\left(\prod_{i\neq k}\left(\beta_i\alpha_j-\alpha_i\beta_j\right)\right)( \alpha_jx + \beta_j y)^{d-r+1}\\
=&\lambda_k\frac{d!}{(d-r+1)!}
\left(\prod_{i\neq k}\left(\beta_i\alpha_k-\alpha_i\beta_k\right)\right)( \alpha_kx + \beta_k y)^{d-r+1}\\
\end{aligned}\]
we have $\lambda_k \neq 0$ for all $1 \le k \le r$. The proof is completed.
\end{proof}

The following is a direct corollary of the Apolarity Lemma.
\begin{corollary} \label{cal}
 Given a binary form \( f \), its Waring rank is 
\[
\operatorname{WR}(f)= \min \{ r \mid \prod_{i=1}^{r} (\beta_i \partial_x - \alpha_i \partial_y)\in f^\perp, \ \beta_i \partial_x - \alpha_i \partial_y \text{ are pairwise non-proportional} \}.
\]
\end{corollary}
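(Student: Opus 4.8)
The plan is to write the right-hand side as a number $m$ and prove $m = \operatorname{WR}(f)$ by establishing the two inequalities $m \le \operatorname{WR}(f)$ and $\operatorname{WR}(f) \le m$, invoking the two directions of Lemma \ref{2}. Before doing so, I would first record that the set over which the minimum is taken is nonempty and that $m \le d+1$: choosing any $d+1$ pairwise non-proportional linear forms, the operator $\prod_{i=1}^{d+1}(\beta_i\partial_x-\alpha_i\partial_y)$ has degree $d+1$ and therefore annihilates every element of $\mathcal{R}_d$, in particular $f$. Since $f \neq 0$, the degenerate value $r=0$ (empty product equal to the identity) is excluded, so $1 \le m \le d+1$, which keeps us within the range $1\le r\le d+1$ where Lemma \ref{2} applies.

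For $m \le \operatorname{WR}(f)$, I would take a minimal Waring decomposition $f = \sum_{i=1}^{r}\lambda_i(\alpha_ix+\beta_iy)^d$ with $r=\operatorname{WR}(f)$. A short preliminary reduction shows that in a minimal decomposition all coefficients may be assumed nonzero and all linear forms pairwise non-proportional: otherwise one could merge proportional summands and obtain a strictly shorter decomposition, contradicting minimality. The implication \ref{enum:item1} $\Rightarrow$ \ref{enum:item2} of Lemma \ref{2} then gives $\prod_{i=1}^{r}(\beta_i\partial_x-\alpha_i\partial_y)\circ f = 0$, so $r$ is an admissible value in the set defining $m$, whence $m \le r = \operatorname{WR}(f)$.

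For $\operatorname{WR}(f) \le m$ lies the only genuine subtlety. Suppose $m$ is attained by pairwise non-proportional forms with $\prod_{i=1}^{m}(\beta_i\partial_x-\alpha_i\partial_y)\in f^\perp$. This gives the first half of condition \ref{enum:item2}, but I must still supply the second half, namely $\prod_{i\neq k}(\beta_i\partial_x-\alpha_i\partial_y)\circ f \neq 0$ for every $k$. The key observation is that the minimality of $m$ forces this automatically: if $\prod_{i\neq k}(\beta_i\partial_x-\alpha_i\partial_y)\circ f = 0$ for some $k$, then the $m-1$ remaining forms, still pairwise non-proportional, would furnish an annihilating product of length $m-1$, contradicting the minimality of $m$. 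Hence the full condition \ref{enum:item2} holds, and \ref{enum:item2} $\Rightarrow$ \ref{enum:item1} of Lemma \ref{2} yields a Waring decomposition of $f$ with $m$ summands, so $\operatorname{WR}(f)\le m$. Combining the two inequalities finishes the proof.

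I expect the merging-of-proportional-forms reduction and, above all, the minimality argument that automatically recovers the non-vanishing conditions to be where the real content sits; it is precisely this argument that justifies dropping the ``$\neq 0$'' clauses of Lemma \ref{2} from the rank formula. Everything else is a direct transcription of the Apolarity Lemma, so I do not anticipate further obstacles.
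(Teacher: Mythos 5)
Your proof is correct and is essentially the paper's approach: the paper asserts this corollary directly from the Apolarity Lemma without writing out any argument, and yours simply supplies the omitted details. In particular, your two key observations --- that proportional summands in a minimal decomposition can be merged, and that the minimality of $m$ automatically forces the non-vanishing conditions in part (ii) of Lemma \ref{2} --- are exactly the routine verifications the paper leaves to the reader.
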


\section{The Waring Problem of Harmonic Polynomials}
In this section, we investigate the Waring decompositions of Harmonic forms. We show that the Waring rank of a harmonic form is equal to its degree. Moreover, we provide an explicit, constructive algorithm for obtaining the minimal Waring decompositions. Our approach is based on analyzing their annihilating ideals.

\subsection{Properties of Harmonic Forms}

We begin with some examples of harmonic forms.
\begin{example}\label{9}
Let \[h_{d,0}=\sum_{j=0}^{\left\lfloor\frac{d}{2} \right\rfloor}(-1)^j \binom{d}{2j}x^{d-2j} y^{2j}\in\mathcal{R}_d,\]
\[h_{d,1}=\sum_{j=0}^{\left\lfloor\frac{d-1}{2} \right\rfloor}(-1)^j \binom{d}{2j+1} x^{d-2j-1} y^{2j+1}\in\mathcal{R}_d,\]where \(\left\lfloor q\right\rfloor\) denotes the greatest integer less than or equal to the real number \(q\). Computing \(\Delta \circ h_{d,0}\) yields
\[\Delta \circ h_{d,0}=d(d-1)\sum_{j=0}^{\left\lfloor\frac{d}{2}\right\rfloor-1}
\left((-1)^j+(-1)^{j+1}\right) \binom{d-2}{2j}x^{d-2-2j} y^{2j}=0.\]
Similarly, one can verify that \(\Delta \circ h_{d,1}=0.\) Therefore, \(h_{d,0}\) and \(h_{d,1}\) are harmonic forms, and so is any linear combination of \(h_{d,0}\) and \(h_{d,1}\). 
\end{example}

In fact, any harmonic \(d\)-form is a linear combination of \(h_{d,0}\) and \(h_{d,1}\), as we will prove below. 

\begin{proposition} \label{h}
Let \(f\in \mathcal{R}_d\). Then \(f\) is harmonic if and only if \(f=a_0h_{d,0}+a_1h_{d,1}.\)
\end{proposition}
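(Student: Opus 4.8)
The forward implication is already settled by Example \ref{9}, so the plan is to prove the converse by a dimension count: I will show that the space of harmonic $d$-forms, namely $\ker(\Delta\colon \mathcal{R}_d \to \mathcal{R}_{d-2})$, is exactly two-dimensional, and then observe that $h_{d,0}$ and $h_{d,1}$ already furnish a basis of it.

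First I would write an arbitrary $f \in \mathcal{R}_d$ in the monomial basis as $f = \sum_{k=0}^{d} c_k\, x^{d-k} y^k$ and compute $\Delta \circ f$ term by term. Collecting the coefficient of $x^{d-2-m} y^m$ for each $0 \le m \le d-2$ yields the single relation
\[
(d-m)(d-m-1)\, c_m + (m+2)(m+1)\, c_{m+2} = 0.
\]
Thus $\Delta \circ f = 0$ is equivalent to the two-term recursion $c_{m+2} = -\tfrac{(d-m)(d-m-1)}{(m+2)(m+1)}\, c_m$ for $m = 0, \dots, d-2$. Because the denominator $(m+2)(m+1)$ never vanishes for $m \ge 0$, this recursion determines every $c_2, c_3, \dots, c_d$ from the two initial data $c_0$ and $c_1$, with the even-indexed coefficients governed by $c_0$ and the odd-indexed ones by $c_1$. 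Hence the solution space has dimension exactly two.

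To finish, I would note that $h_{d,0}$ and $h_{d,1}$ are harmonic by Example \ref{9} and are linearly independent, since $h_{d,0}$ is supported on monomials with even $y$-exponent while $h_{d,1}$ is supported on those with odd $y$-exponent, so their supports are disjoint. Two linearly independent elements of a two-dimensional space form a basis, whence every harmonic $f$ can be written as $a_0 h_{d,0} + a_1 h_{d,1}$, as claimed. (Alternatively, one can check directly that the recursion with initial data $(c_0,c_1)=(1,0)$ and $(0,1)$ reproduces the binomial coefficients in $h_{d,0}$ and $h_{d,1}$, but identifying the basis this way is unnecessary once the kernel dimension is pinned down.)

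The computation of $\Delta \circ f$ and the resulting recursion are entirely routine, so I do not anticipate a genuine obstacle; the only points requiring care are verifying that the numerator and denominator in the recursion stay nonzero across the full range $0 \le m \le d-2$ (both are positive there, since $d - m \ge 2$), which guarantees that $c_0$ and $c_1$ are genuinely free, and dispatching the degenerate low-degree cases $d = 0, 1$, where $\mathcal{R}_{d-2} = 0$ makes every form harmonic and the identity $f = a_0 h_{d,0} + a_1 h_{d,1}$ holds trivially (with $h_{0,1} = 0$).
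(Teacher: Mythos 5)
Your proof is correct and takes essentially the same route as the paper: expand $f$ in the monomial basis, apply $\Delta$, and observe that the vanishing of $\Delta\circ f$ is a two-term recursion leaving exactly the two initial coefficients free. The only cosmetic difference is the finish --- the paper normalizes coefficients with binomial factors so the recursion reads $a_j+a_{j+2}=0$, solves it explicitly as $a_{2k}=(-1)^k a_0$, $a_{2k+1}=(-1)^k a_1$, and reads off $f=a_0h_{d,0}+a_1h_{d,1}$ directly, whereas you conclude by a dimension count together with the linear independence of $h_{d,0}$ and $h_{d,1}$; both finishes are valid, and your handling of the degenerate cases $d=0,1$ is a careful touch the paper leaves implicit.
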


\begin{proof}
One direction follows from Example \ref{9}. For the converse, suppose \(f = \sum_{j=0}^{d} \binom{d}{j} a_j x^{d-j} y^j\) is harmonic. By definition, we have \[\Delta \circ f=d!\sum_{j=0}^{d-2}\left(a_j+a_{j+2}\right)\frac{x^{d-j-2}}{(d-j-2)!}\frac{y^{j}}{j!} =0.\]
Since a polynomial vanishes identically exactly when all coefficients vanish, this implies
\begin{equation}
a_j+a_{j+2} =0 \quad (0 \leq j \leq d-2). \label{3}
\end{equation}
It follows that the coefficients of \(f\) are determined by two independent recurrence relations
\begin{enumerate} 
  \item [-] Even degree terms \( (j=2k) \): \(a_{2k}=(-1)^k a_0;\)
  \item [-] Odd degree terms \( (j=2k+1) \): \(a_{2k+1}=(-1)^k a_1.\)
\end{enumerate}
Thus \(f\) decomposes into even and odd parts:
\[\begin{aligned}
f&=a_0\sum_{j=0}^{\left\lfloor \frac{d}{2} \right\rfloor}(-1)^j \binom{d}{2j}x^{d-2j} y^{2j}
+a_1\sum_{j=0}^{\left\lfloor\frac{d-1}{2} \right\rfloor}(-1)^j \binom{d}{2j+1} x^{d-2j-1} y^{2j+1}\\
&=a_0h_{d,0}+a_1h_{d,1}.
\end{aligned}\]
Therefore, any harmonic \(d\)-form is a linear combination of \(h_{d,0}\) and \(h_{d,1}.\)
\end{proof}

\begin{remark}
    This characterization of harmonic forms can also be derived easily by the Waring decompositions of complex binary forms. As a harmonic $d$-form $f$ is annihilated by $\partial_x^2+\partial_y^2=(\partial_x + i \partial_y)(\partial_x - i \partial_y),$ it has a complex Waring decomposition as \[f=z(x+iy)^d + \overline{z}(x-iy)^d\] according to the Apolarity Lemma of complex binary forms \cite{Huang2025}. The coefficients are conjugate due to the fact that $f$ is a real polynomial. Now the property follows by a direct expansion of the right hand side of the previous equation. 
\end{remark}

\subsection{Annihilating Ideals of Harmonic forms}
We give a concrete characterization of
the annihilating ideals of Harmonic forms below.
\begin{proposition}\label{4}
Let \(f=a_0h_{d,0}+a_1h_{d,1} \ne 0\) and \(\nabla=a_1\partial_x^{d}-a_0\partial_x^{d-1}\partial_y.\) Then \(f^\perp=\left<\Delta, \nabla\right>.\)  
\end{proposition}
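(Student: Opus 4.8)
The plan is to establish $f^\perp = \langle \Delta, \nabla\rangle$ by showing the two graded vector spaces coincide in every degree. The inclusion $\langle \Delta, \nabla\rangle \subseteq f^\perp$ is the easy half: I only need both generators to annihilate $f$. Now $\Delta \circ f = 0$ because $f$ is harmonic, and $\nabla \circ f = 0$ follows from a short computation. For the latter I would first record the differentiation rules $\partial_x h_{d,0} = d\,h_{d-1,0}$, $\partial_y h_{d,0} = -d\,h_{d-1,1}$, $\partial_x h_{d,1} = d\,h_{d-1,1}$, $\partial_y h_{d,1} = d\,h_{d-1,0}$, from which $\partial_x^d \circ f = a_0\,d!$ and $\partial_x^{d-1}\partial_y \circ f = a_1\,d!$; hence $\nabla \circ f = a_1(a_0\,d!) - a_0(a_1\,d!) = 0$. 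Since $f^\perp$ is an ideal, $\langle \Delta, \nabla\rangle \subseteq f^\perp$, and it remains to match graded dimensions.

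To compute $\dim(f^\perp)_r$ I would use the apolarity pairing $C_r \colon \mathcal{D}_r \to \mathcal{R}_{d-r}$, $\mathfrak{g} \mapsto \mathfrak{g}\circ f$, so that $\dim(f^\perp)_r = (r+1) - \operatorname{rank} C_r$. Two observations cut the rank down. First, constant-coefficient operators commute with $\Delta$, so every $\mathfrak{g}\circ f$ is again harmonic. Second, the identity $\partial_y^2 \circ f = -\partial_x^2 \circ f$ (a restatement of $\Delta\circ f = 0$) lets me reduce any monomial operator, modulo $\Delta\,\mathcal{D}_{r-2}$, to a combination of $\partial_x^r$ and $\partial_x^{r-1}\partial_y$. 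Therefore $\operatorname{rank} C_r \le 2$, with image spanned by $\partial_x^r\circ f$ and $\partial_x^{r-1}\partial_y\circ f$.

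The step I expect to be the main obstacle is pinning the rank down exactly. Using the differentiation rules one finds, for $1 \le r \le d-1$, that $\partial_x^r\circ f$ and $\partial_x^{r-1}\partial_y\circ f$ equal $\frac{d!}{(d-r)!}$ times $a_0 h_{d-r,0} + a_1 h_{d-r,1}$ and $a_1 h_{d-r,0} - a_0 h_{d-r,1}$ respectively; since $h_{d-r,0}, h_{d-r,1}$ are linearly independent for $d-r \ge 1$, their independence reduces to the determinant $-(a_0^2 + a_1^2)$, which is nonzero precisely because $f \ne 0$. Thus $\operatorname{rank} C_r = 2$ on this range, whereas at $r = d$ the same two images collapse to $a_0\,d!$ and $a_1\,d!$ in $\mathcal{R}_0$, of rank $1$. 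This yields $\dim(f^\perp)_r$ equal to $0$ for $r \le 1$, to $r-1$ for $2 \le r \le d-1$, to $d$ for $r = d$, and to $r+1$ for $r > d$.

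Finally I would compute $\dim\langle \Delta, \nabla\rangle_r$ and compare. Since $\Delta = \partial_x^2 + \partial_y^2$ is irreducible over $\mathbb{R}$ and $\nabla = \partial_x^{d-1}(a_1\partial_x - a_0\partial_y)$ is not divisible by it, multiplication by $\Delta$ is injective and $\Delta\,\mathcal{D}_{r-2} \cap \nabla\,\mathcal{D}_{r-d} = 0$; hence $\langle \Delta, \nabla\rangle_r$ has dimension $r-1$ for $2 \le r \le d-1$, dimension $d$ at $r = d$ (the extra generator $\nabla$ being independent of $\Delta\,\mathcal{D}_{d-2}$), and equals all of $\mathcal{D}_r$ for $r \ge d+1$. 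These dimensions agree with those of $(f^\perp)_r$ in every degree, so the inclusion $\langle \Delta, \nabla\rangle \subseteq f^\perp$ is an equality degree by degree, giving $f^\perp = \langle \Delta, \nabla\rangle$. Equivalently, both quotients have Hilbert series $(1+t)(1 + t + \cdots + t^{d-1})$, the signature of a complete intersection of degrees $2$ and $d$.
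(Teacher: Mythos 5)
Your argument is, at its core, the paper's own proof reorganized as a graded dimension count: your reduction of every operator modulo $\Delta\,\mathcal{D}_{r-2}$ to the span of $\partial_x^r$ and $\partial_x^{r-1}\partial_y$ is the paper's division-with-remainder step, and your rank computation via the determinant $-(a_0^2+a_1^2)$ is exactly the paper's Cases 1 and 2 (your $h_{d-r,0},h_{d-r,1}$ are its $\widetilde{h}_0,\widetilde{h}_1$ up to the factor $\tfrac{d!}{(d-r)!}$). The differentiation rules $\partial_x\circ h_{d,0}=d\,h_{d-1,0}$, $\partial_y\circ h_{d,0}=-d\,h_{d-1,1}$, $\partial_x\circ h_{d,1}=d\,h_{d-1,1}$, $\partial_y\circ h_{d,1}=d\,h_{d-1,0}$ are all correct and make that part clean. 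Comparing Hilbert functions instead of chasing a single element $\mathfrak{g}\in f^\perp$, as the paper does, is a legitimate and arguably tidier way to finish --- provided both Hilbert functions are computed correctly.

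That is where your one genuine error sits. The claim that $\Delta\,\mathcal{D}_{r-2}\cap\nabla\,\mathcal{D}_{r-d}=0$ is false for every $r\ge d+2$: the element $\Delta\nabla$ lies in both subspaces, and more generally the intersection contains $\nabla\Delta\,\mathcal{D}_{r-d-2}$. Indeed, were the intersection zero, your count would give
\[
\dim\langle\Delta,\nabla\rangle_r=(r-1)+(r-d+1)=2r-d>r+1=\dim\mathcal{D}_r\qquad(r\ge d+2),
\]
which is absurd. So your justification of ``$\langle\Delta,\nabla\rangle_r=\mathcal{D}_r$ for $r\ge d+1$'' is only valid at $r=d+1$, where the intersection really is zero: $\Delta$ is irreducible, $\Delta\nmid\nabla$, and a nonzero $v\in\mathcal{D}_1$ cannot be divisible by the quadratic $\Delta$, so $\Delta u=\nabla v$ forces $v=0$. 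The repair is the one the paper makes explicitly: once $\langle\Delta,\nabla\rangle_{d+1}=\mathcal{D}_{d+1}$ is established, every higher degree is automatic because $\mathcal{D}_r=\mathcal{D}_{r-d-1}\cdot\mathcal{D}_{d+1}\subseteq\langle\Delta,\nabla\rangle_r$ for $r>d+1$. With that one sentence replaced, your dimensions match those of $f^\perp$ in every degree and the proof is complete.
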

\begin{proof}
One can easily verify that \(\nabla\circ f = [(-1)^0-(-1)^0]a_0 a_1=0\), thus \(\langle \Delta, \nabla\rangle \subseteq f^\perp\). It remains to prove the reverse.

Take any
$\displaystyle \mathfrak{g} = \sum_{i=0}^r b_i \partial_x^{r-i} \partial_y^i \in f^\perp \cap \mathcal{D}_r.$
View $\mathfrak{g}$ and $\Delta$ as polynomials in \(\partial_y\) and conduct the division with remainder of $\mathfrak{g}$ by $\Delta$. 
By the polynomial division algorithm, see e.g. \cite{Cox2015}, there exist \( \widetilde{b}_0, \widetilde{b}_1 \in \mathbb{R} \) and \( \mathfrak{h} \in \mathcal{D}_{r-2} \) such that
\[
\mathfrak{g} = \Delta \cdot \mathfrak{h} + \widetilde{b}_0 \partial_x^r + \widetilde{b}_1 \partial_x^{r-1} \partial_y. 
\]
Since \( \mathfrak{g} \circ f = 0 \) and \( \Delta \circ f = 0 ,\) it follows that
\begin{equation}
  (\widetilde{b}_0 \partial_x^r + \widetilde{b}_1 \partial_x^{r-1} \partial_y) \circ f = 0. \label{11}
\end{equation}
For the convenience of the subsequent discussion, define
\[
\widetilde{h}_0=\frac{d!}{(d-r)!}
\sum_{j=0}^{\left\lfloor\frac{d-r}{2}\right\rfloor}(-1)^j\binom{d-r}{2j}x^{d-r-2j}y^{2j},
\]
\[\widetilde{h}_1=\frac{d!}{(d-r)!}
\sum_{j=0}^{\left\lfloor\frac{d-r-1}{2}\right\rfloor}(-1)^j\binom{d-r}{2j+1}x^{d-r-2j-1}y^{2j+1}.
\]
We now proceed by cases:
\begin{enumerate}[label=\textbf{Case \arabic*}:, leftmargin=*]
\item  \( 1 \leq r \leq d-1 \)

A direct computation of Equation \eqref{11} yields
\[
(a_0 \widetilde{b}_0 + a_1 \widetilde{b}_1) \widetilde{h}_0 + (a_1 \widetilde{b}_0 - a_0 \widetilde{b}_1) \widetilde{h}_1 = 0.
\]
Since \( \widetilde{h}_0 \) and \( \widetilde{h}_1 \) are linearly independent, one has
\[
\begin{cases}
a_0 \widetilde{b}_0 + a_1 \widetilde{b}_1 = 0, \\
a_1 \widetilde{b}_0 - a_0 \widetilde{b}_1 = 0.
\end{cases}
\]

Since \( (a_0, a_1) \neq (0,0) \), the only solution is \( \widetilde{b}_0 = \widetilde{b}_1 = 0 \). Thus,
\[
\mathfrak{g} = \Delta \cdot \mathfrak{h} \in \left< \Delta \right>.
\]

\item  \( r = d \)

In this case, Equation \eqref{11} simplifies to
\[
d! (a_0 \widetilde{b}_0 + a_1 \widetilde{b}_1) = 0.
\]
The general solution is \( \widetilde{b}_0 = \mu a_1, \widetilde{b}_1 = -\mu a_0 \) for \( \mu \in \mathbb{R} \). Therefore,
\[
\mathfrak{g} = \Delta \cdot \mathfrak{h} + \mu (a_1 \partial_x^d - a_0 \partial_x^{d-1} \partial_y) = \Delta \cdot \mathfrak{h} + \mu \nabla \in \left< \Delta, \nabla \right>.
\]
\end{enumerate}

It remains to prove that when \(r\geq d+1,\) we have \(f^\perp \cap \mathcal{D}_r\subseteq \left< \Delta,\nabla\right>.\) In this case, it is obvious that \(f^\perp\cap \mathcal{D}_r = \mathcal{D}_r.\)  Therefore, it suffices to prove that 
\(\mathcal{D}_r=\left<\Delta,\nabla\right>\cap\mathcal{D}_r\) holds.
\begin{enumerate}[label=\textbf{Case \arabic*}:, leftmargin=*, start=3]
\item  \( r = d+1 \)

Since \( \Delta \) and \( \nabla \) are coprime, it follows that \[
\dim(\left< \Delta, \nabla \right> \cap \mathcal{D}_{d+1}) = \dim(\left< \Delta \right> \cap \mathcal{D}_{d+1}) + \dim(\left< \nabla \right> \cap \mathcal{D}_{d+1}) = \dim \mathcal{D}_{d-1} + \dim \mathcal{D}_1 = d + 2 = \dim \mathcal{D}_{d+1}.
\] I.e.,  \( \mathcal{D}_{d+1} =\left< \Delta, \nabla \right> \cap \mathcal{D}_{d+1} .\) 

\item  \( r > d+1 \)

Since $\mathcal{D}_r$ can be generated by $\mathcal{D}_{d+1}=\left<\Delta,\nabla\right>\cap\mathcal{D}_{d+1},$ we have
$\mathcal{D}_r=\left<\Delta,\nabla\right>\cap\mathcal{D}_r.$
\end{enumerate}

To summarize, \( f^\perp  = \left< \Delta, \nabla \right>.\)  We are done.
\end{proof}

\subsection{The Waring Decompositions of Harmonic forms}

We are now in the position to present the main results.
\begin{theorem}\label{8}
The Waring rank of a harmonic \(d\)-form is \(d,\) and any linear form can appear in one of the minimal Waring decompositions.
\end{theorem}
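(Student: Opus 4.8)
The plan is to combine the structural description of the annihilating ideal from Proposition \ref{4} with the rank formula in Corollary \ref{cal}. By Proposition \ref{4}, a nonzero harmonic $d$-form $f=a_0h_{d,0}+a_1h_{d,1}$ has annihilating ideal $f^\perp=\langle\Delta,\nabla\rangle$ with $\nabla=a_1\partial_x^d-a_0\partial_x^{d-1}\partial_y$. According to Corollary \ref{cal}, the Waring rank is the least $r$ for which $f^\perp$ contains a product of $r$ pairwise non-proportional real linear operators $\beta_i\partial_x-\alpha_i\partial_y$. So the first task is to show no such product of length $r<d$ can lie in $\langle\Delta,\nabla\rangle$, and the second task is to exhibit, for \emph{every} prescribed linear form, a length-$d$ product that does lie in the ideal and is squarefree.

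For the lower bound $\operatorname{WR}(f)\ge d$, I would argue by degrees. Any element of $f^\perp\cap\mathcal{D}_r$ with $r\le d-1$ lies in $\langle\Delta\rangle$ by Case 1 of Proposition \ref{4}, hence is divisible by $\Delta=\partial_x^2+\partial_y^2$. But $\Delta$ is an irreducible quadratic over $\mathbb{R}$ with no real linear factors, so a product $\prod_{i=1}^r(\beta_i\partial_x-\alpha_i\partial_y)$ of real linear operators can never be divisible by $\Delta$; thus no squarefree product of real linear forms sits in $f^\perp$ below degree $d$, forcing $\operatorname{WR}(f)\ge d$. This is the clean payoff of having pinned down the ideal.

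For the upper bound together with the ``any linear form'' claim, I would work in degree $r=d$, where by Case 2 the intersection $f^\perp\cap\mathcal{D}_d$ consists of operators $\Delta\cdot\mathfrak{h}+\mu\nabla$. The goal is: given an arbitrary linear form, i.e. a prescribed factor $\beta_1\partial_x-\alpha_1\partial_y$, produce $\mu$ and $\mathfrak{h}\in\mathcal{D}_{d-2}$ so that $\Delta\cdot\mathfrak{h}+\mu\nabla$ factors as a product of $d$ pairwise non-proportional real linear operators, one of which is the prescribed one. The natural route is to pass to the factored picture: a degree-$d$ real operator with $d$ distinct real roots corresponds to $d$ distinct points $[\alpha_i:\beta_i]$, and membership in $f^\perp$ is one linear condition (the $\nabla$-component, equivalently that the operator reduces mod $\Delta$ to a multiple of $\nabla$). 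I would fix the desired point $[\alpha_1:\beta_1]$, then show the remaining $d-1$ points can be chosen distinct from it and from each other so that the single membership condition is met — concretely by choosing $d$ distinct real parameters whose corresponding operator, reduced modulo $\Delta$, matches $\mu\nabla$. The squarefreeness (pairwise non-proportionality) is what makes the product a genuine minimal decomposition via the second clause of Lemma \ref{2}.

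The main obstacle is the last construction: guaranteeing that a \emph{real} squarefree product of the required length exists containing an arbitrary prescribed linear factor, since membership in $\langle\Delta,\nabla\rangle$ is only a codimension-one constraint and one must verify the solution genuinely has $d$ distinct real roots rather than complex or repeated ones. I expect this to hinge on an explicit parametrization — likely writing the reduction of a candidate product modulo $\Delta$ in terms of the coefficients and solving the resulting linear relation — so that for each choice of the first form the remaining forms are determined up to enough freedom to keep all $d$ roots real and distinct. Everything else (the lower bound, the verification that the ideal is as claimed) is already in hand from the preceding propositions, so the burden of proof rests almost entirely on making this real, distinct, prescribed-factor decomposition explicit.
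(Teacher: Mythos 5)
Your lower bound argument is correct and coincides with the paper's: any annihilating operator of degree $r<d$ lies in $\langle\Delta\rangle$ by Proposition \ref{4}, and the irreducible quadratic $\Delta$ cannot divide a squarefree product of real linear operators, so $\operatorname{WR}(f)\ge d$.

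The gap is in the second half. You correctly reduce the theorem to the statement: for every prescribed operator $\beta_1\partial_x-\alpha_1\partial_y$ there is a product of $d$ pairwise non-proportional \emph{real} linear operators, containing the prescribed one, lying in $\langle\Delta,\nabla\rangle$. But you never prove this; you only observe that membership in $f^\perp\cap\mathcal{D}_d$ is a codimension-one linear condition and say you ``expect'' an explicit parametrization to yield a solution with $d$ distinct real roots. That expectation is precisely the content of the theorem: a dimension count cannot distinguish real roots from complex or repeated ones, and over $\mathbb{R}$ this is exactly where the difficulty sits (without the reality constraint the statement is easy). By your own admission ``the burden of proof rests almost entirely'' on this construction, and it is absent. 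The paper closes exactly this gap by a different manoeuvre: instead of hunting inside the ideal, it applies the $d-2$ arbitrary (hence prescribable) operators $\mathfrak{g}=\prod_{i=1}^{d-2}(\beta_i\partial_x-\alpha_i\partial_y)$ directly to $f$. Since $\Delta$ commutes with $\mathfrak{g}$, the form $\mathfrak{g}\circ f$ is a harmonic quadratic, nonzero precisely because of the already-established lower bound, hence equal to $X^2-Y^2$ after a linear change of variables; one then checks directly that $(\delta_1\partial_X-\gamma_1\partial_Y)(\delta_2\partial_X-\gamma_2\partial_Y)$ annihilates $X^2-Y^2$ if and only if $\gamma_1\gamma_2=\delta_1\delta_2$, which has infinitely many real solutions, so two further factors can always be chosen non-proportional to the given $d-2$ and to each other. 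This reduction to degree $2$ is the missing idea: it converts your intractable ``$d$ distinct real roots subject to one linear condition'' problem into an explicit two-variable computation. To salvage your route you would need an actual construction (for instance this one, pulled back through $\mathfrak{g}$), not just the codimension count.
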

\begin{proof}
 Let \(f\) be a harmonic \(d\)-form. By the Apolarity Lemma \ref{2} and Proposition \ref{4}, we have \(\operatorname{WR}(f)\geq d\). Indeed, if on the contrary \(\operatorname{WR}(f) = r < d \), then there exists an \(\mathfrak{h} \in f^\perp \cap \mathcal{D}_r\) which is a product of non-proportional linear forms due to the Apolarity Lemma \ref{2} and \(\Delta \mid \mathfrak{h}\) due to Proposition \ref{4}. This is absurd as $\Delta$ is irreducible and quadratic.

Consider an arbitrary operator 
\(\displaystyle \mathfrak{g}=\prod_{i=1}^{d-2}\left(\beta_i\partial_x-\alpha_i\partial_y\right) \in \mathcal{D}_{d-2}\)
with pairwise non-proportional linear factors. Then
\[
\Delta\circ (\mathfrak{g} \circ f) = (\Delta \cdot \mathfrak{g})\circ f = \mathfrak{g} \circ (\Delta \circ f) = 0,
\]
which shows that \(\mathfrak{g} \circ f\) is a harmonic \(2\)-form. By Proposition \ref{h}, we may write
\(
\mathfrak{g} \circ f = ax^2 + 2bxy - ay^2.
\)
Note that $(a, b) \ne (0, 0)$ since $\mathfrak{g} \circ f \ne 0$ by the fact \(\operatorname{WR}(f)\geq d\).

It remains to find two suitable linear differential operators whose product annihilates $\mathfrak{g} \circ f.$ Note that we can apply an appropriate invertible linear transformation to \(\mathfrak{g} \circ f\) so that \(\mathfrak{g} \circ f=X^2-Y^2.\) By direct verification, one has $(\delta_1 \partial_X - \gamma_1 \partial_Y)(\delta_2 \partial_X - \gamma_2 \partial_Y) \circ (X^2-Y^2) =0$ if and only if $\gamma_1\gamma_2=\delta_1\delta_2.$ Obviously, there are infinitely many choices for the $(\delta_i, \gamma_i).$ For example, take $(\delta_1, \gamma_1)=(s,t)$ and $(\delta_2, \gamma_2)=(1/s,1/t)$ for all $s,t \in \R^*.$ Therefore, we can choose $d$ pairwise non-proportional linear factors $\beta_i\partial_x-\alpha_i\partial_y$ so that \[ \prod_{i=1}^{d}\left(\beta_i\partial_x-\alpha_i\partial_y\right) \circ f =0. \] According to Corollary \ref{cal}, we have \(\operatorname{WR}(f) \le d\). 

In summary, we have proved that \(\operatorname{WR}(f) = d\). In addition, we have also showed that any linear form can appear in one of the minimal decompositions of $f.$ The corresponding minimal decomposition can be obtained by the Apolarity Lemma \ref{2}.
\end{proof}

\begin{remark}
Reznick \cite{Reznick2013} has shown that the Waring rank of a real binary form does not exceed its degree. So the claim \(\operatorname{WR}(f) = d\) of Theorem \ref{8} follows from Proposition \ref{4} and Reznick's result. Here by using intrinsic properties of Harmonic forms, we both determine the rank and explicitly construct the minimal decompositions.
\end{remark}

In addition, it was conjectured by Reznick \cite{Reznick2013} and proved by Bleckermann and Sinn \cite{BS} that a real binary $d$-form has real Waring rank $d$ if and only if it has only real roots. Combining this with our Theorem \ref{8}, one can deduce the following result which we have not been able to find in the literature.

\begin{corollary}
   Harmonic forms have only real roots.  
\end{corollary}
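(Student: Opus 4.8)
The plan is to combine the two external results quoted just above the corollary with Theorem \ref{8}. The statement to prove is that every harmonic $d$-form has only real roots. The Blekherman--Sinn characterization says that a real binary $d$-form has real Waring rank exactly $d$ if and only if it has only real roots. Theorem \ref{8} establishes that every nonzero harmonic $d$-form has Waring rank precisely $d$. So the corollary should follow by a one-line application: a nonzero harmonic form, having rank equal to its degree, must have only real roots by the Blekherman--Sinn theorem. The zero form is vacuous or excluded, so it suffices to treat nonzero harmonic forms, which is exactly the setting of Theorem \ref{8}.

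Concretely, first I would fix a nonzero harmonic $d$-form $f$. By Theorem \ref{8}, $\operatorname{WR}(f)=d$. Then I would invoke the equivalence of Reznick's conjecture as proved by Blekherman and Sinn: since $\operatorname{WR}(f)=d$, the ``only if'' direction of that equivalence forces $f$ to have only real roots. That completes the argument. There is essentially no computation here; the work has already been done in establishing the rank in Theorem \ref{8}.

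I would also double-check a degenerate boundary issue: the Blekherman--Sinn equivalence is stated for binary $d$-forms, and one should make sure the harmonic forms $h_{d,0},h_{d,1}$ and their nonzero linear combinations are genuinely degree-$d$ binary forms (not, say, accidentally of lower degree). From Example \ref{9} and Proposition \ref{h} the leading behavior is controlled by $a_0$ and $a_1$, so a nonzero harmonic form is indeed a bona fide $d$-form, and the equivalence applies cleanly. For completeness one may note the roots are counted over $\mathbb{P}^1(\mathbb{C})$ and the claim is that all $d$ of them are real, i.e. correspond to real linear factors.

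The main (and only) potential obstacle is not a mathematical gap but a matter of citation hygiene: one must be certain that the Blekherman--Sinn result is the full biconditional (both directions), and in particular that we are using the direction ``rank $=d \Rightarrow$ only real roots.'' Since the excerpt already attributes the complete equivalence to \cite{BS}, this direction is available, and no further input is required. Thus the proof is immediate:
\begin{proof}
Let $f$ be a nonzero harmonic $d$-form. By Theorem \ref{8}, $\operatorname{WR}(f)=d$. By the equivalence conjectured in \cite{Reznick2013} and proved in \cite{BS}, a real binary $d$-form has Waring rank $d$ if and only if it has only real roots. Hence $f$ has only real roots.
\end{proof}
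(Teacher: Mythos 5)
Your proof is correct and takes exactly the same route as the paper: the corollary is deduced there by combining Theorem \ref{8} (harmonic $d$-forms have Waring rank $d$) with the Reznick--Blekherman--Sinn equivalence that a real binary $d$-form has rank $d$ if and only if it has only real roots. Your additional check that a nonzero harmonic form is genuinely of degree $d$ is a reasonable bit of care, but otherwise there is nothing to add.
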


Based on the idea presented in the proof of Theorem \ref{8}, we propose an algorithm for constructing the minimal decompositions of Harmonic forms, using also the \(h_{d,0}, h_{d,1}\) from Example \ref{9}.

\begin{algorithm} \label{Algo}
Input a harmonic \(d\)-form \(f=a_0h_{d,0}+a_1h_{d,1}\).

\begin{enumerate}[label=\textbf{Step \arabic*}:, leftmargin=*]
  \item  Take a product \(\mathfrak{g}=\prod_{i=1}^{d-2}(\beta_i\partial_x-\alpha_i\partial_y)\) of pairwise non-proportional linear differential operators. 

  \item  Compute \(\mathfrak{g}\circ f\) and complete squares for the obtained quadratic form $ax^2+2bxy-ay^2.$

  \item  Choose a sum of squares $ax^2+2bxy-ay^2=\psi_1^2-\psi_2^2$ such that all linear forms $\alpha_1 x+\beta_1 y, \ \dots, \ \alpha_{d-2}x+\beta_{d-2}y$ together with \(\psi_1,\psi_2\) are pairwise non-proportional. 

  \item  Solve \(\lambda_1,\dots,\lambda_d\in\mathbb{R}^*\) in the equation
  \( \displaystyle
  f = \sum_{i=1}^{d-2}\lambda_i(\alpha_ix+\beta_iy)^d + \lambda_{d-1}\psi_1^d + \lambda_d\psi_2^d
  \)
  by comparing coefficients of the corresponding monomials, and output a minimal Waring decomposition of f.
\end{enumerate}
\end{algorithm}

\begin{remark}
  In Step 3 of the above algorithm, we have applied the Apolarity Lemma \ref{2}. We do not compute the annihilating polynomials of $\mathfrak{g} \circ f$ as in the proof of Theorem \ref{8}, but complete square for it instead. If $ax^2+2bxy-ay^2=(\alpha x +\beta y)^2-(\alpha' x +\beta' y)^2,$ then $(\beta \partial_x - \alpha \partial_y)(\beta' \partial_x - \alpha' \partial_y)\mathfrak{g} \circ f=0.$ Taking suitable $\alpha, \beta, \alpha', \beta'$ so that $\beta \partial_x - \alpha \partial_y, \beta' \partial_x - \alpha' \partial_y$ and the $\beta_i \partial_x - \alpha_i \partial_y$ are pairwise non-proportional, then  \( \displaystyle
  f = \sum_{i=1}^{d-2}\lambda_i(\alpha_ix+\beta_iy)^d + \lambda_{d-1}(\alpha x+ \beta y)^d + \lambda_d(\alpha' x+ \beta' y)^d
  \) according to the Apolarity Lemma \ref{2}.
\end{remark}

\subsection{Forms Annihilated by Quadratic Differential Operators}
We will digress briefly here to consider the Waring problem of all forms annihilated by quadratic differential operators. Let $0 \ne \Omega=a\partial_x^2+2b\partial_x\partial_y+c\partial_y^2 \in \mathcal{D}_2.$ It is clear that, up to a change of variables, $\Omega$ is equivalent to $\Delta$ if and only $b^2-ac<0,$ to $\pm\partial_x^2$ if and only if $b^2-ac=0,$ and to $\partial_x^2-\partial_y^2$ if and only if $b^2-ac>0.$ By combining our results in this paper and some of those in \cite{BCG, BS}, we can completely solve the Waring problem of the forms annihilated by $\Omega.$

\begin{theorem} \label{aq}
    Preserve the preceding notations and suppose $\Omega \circ f=0$ where $0 \ne f \in \mathcal{R}_d.$ Then $\operatorname{WR}(f)=d$ if and only if $b^2-ac \le 0,$ and $\operatorname{WR}(f) \le 2$ if and only if $b^2-ac > 0.$
\end{theorem}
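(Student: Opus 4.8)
The plan is to leverage the $\GL_2(\mathbb{R})$-classification of $\Omega$ that is recorded just before the statement, together with the fact that both the Waring rank and the apolarity relation $\Omega \circ f = 0$ transform compatibly under an invertible linear change of variables. Concretely, an $A \in \GL_2(\mathbb{R})$ acting on $x,y$ induces a contragredient action on $\partial_x,\partial_y$ under which the condition $\mathfrak{g}\circ f = 0$ is preserved, while powers of linear forms are sent to powers of linear forms; hence $\operatorname{WR}(f)$ is unchanged, and the discriminant is multiplied by the positive factor $(\det A)^2$, so the sign of $b^2-ac$ is an invariant. Thus I may assume $\Omega$ is one of the three normal forms $\Delta$, $\partial_x^2$, or $\partial_x^2-\partial_y^2$ according to the sign of $b^2-ac$, and it suffices to determine $\operatorname{WR}(f)$ in each case.

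First I would dispatch the two easy cases. When $b^2-ac<0$ the operator is $\Delta$, so $f$ is harmonic and Theorem \ref{8} gives $\operatorname{WR}(f)=d$ directly. When $b^2-ac>0$ the operator factors as $\partial_x^2-\partial_y^2=(\partial_x-\partial_y)(\partial_x+\partial_y)$, a product of two pairwise non-proportional real linear operators lying in $f^\perp$, so Corollary \ref{cal} yields $\operatorname{WR}(f)\le 2$ at once. The substantive case is $b^2-ac=0$, where $\Omega=\partial_x^2$: the forms it annihilates are exactly $f=c_0y^d+c_1xy^{d-1}=y^{d-1}(c_1x+c_0y)$, and for $c_1\ne 0$ a further change of variables reduces $f$ to the binary monomial $xy^{d-1}$, whose real Waring rank is $d$ by \cite{BCG}. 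I would also record the self-contained computation $f^\perp=(\partial_x^2,\partial_y^d)$: in every degree $r$ with $2\le r\le d-1$ the space $f^\perp\cap\mathcal{D}_r=\partial_x^2\,\mathcal{D}_{r-2}$ consists of operators divisible by $\partial_x^2$, hence none is a product of pairwise non-proportional linear forms, forcing $\operatorname{WR}(f)\ge d$ via Corollary \ref{cal}; while a product of $d$ distinct real linear operators whose $(d-1)$-st elementary symmetric function vanishes lies in $f^\perp\cap\mathcal{D}_d$, giving $\operatorname{WR}(f)\le d$.

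Finally I would assemble the two equivalences from these three forward implications by trichotomy. For $d\ge 3$ the outcomes $\operatorname{WR}(f)=d$ and $\operatorname{WR}(f)\le 2$ are mutually exclusive, so each reverse implication follows by contraposition: if $\operatorname{WR}(f)=d$ then $b^2-ac>0$ is impossible, as it would force $\operatorname{WR}(f)\le 2<d$, hence $b^2-ac\le 0$; the second equivalence is symmetric, and the low-degree cases $d\le 2$ are checked by hand. The main obstacle is precisely the parabolic case $b^2-ac=0$: it is the only one not governed by the paper's own Theorem \ref{8} or by a one-line apolarity argument, so it genuinely rests on the monomial computation of \cite{BCG} (or the explicit $f^\perp$ analysis above). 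I also expect the pure-power sub-case $c_1=0$, where $f\sim y^d$ has rank $1$, to require either an explicit exclusion or the hypothesis that $f$ is not a $d$-th power of a linear form; otherwise the clean equivalence $\operatorname{WR}(f)=d\iff b^2-ac\le 0$ fails, and handling this degeneracy carefully is the most error-prone point of the argument.
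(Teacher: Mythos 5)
Your proposal follows essentially the same route as the paper's proof: classify $\Omega$ up to real change of variables by the sign of $b^2-ac$, invoke Theorem \ref{8} in the elliptic case, factor $\Omega$ into two non-proportional linear operators and apply Corollary \ref{cal} in the hyperbolic case, and reduce to the monomial $xy^{d-1}$ handled by \cite{BCG, BS} in the parabolic case; your explicit computation of $(xy^{d-1})^\perp$ is just a self-contained substitute for that citation, and your rank-invariance remark is used silently by the paper. The one place you go beyond the paper is also the most valuable: the degeneracy you flag at the end is real, and it is a gap in the paper's own proof rather than in yours. The paper writes $f=(ax+by)y^{d-1}$ and declares it ``equivalent to $xy^{d-1}$,'' which tacitly assumes $a\neq 0$; for $f=y^d$ one has $\partial_x^2\circ f=0$ with $b^2-ac=0$, yet $\operatorname{WR}(f)=1\neq d$, so both stated equivalences fail unless pure $d$-th powers (equivalently, forms with a linear annihilator) are excluded. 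One correction to your plan, however: the low-degree cases do not ``check out by hand'' --- for $d=2$ the form $f=x^2+y^2$ is annihilated by $\Omega=\partial_x\partial_y$, which has $b^2-ac>0$, yet $\operatorname{WR}(f)=2=d$, so the theorem also implicitly requires $d\ge 3$. Your trichotomy argument for assembling the equivalences from the three forward implications is exactly right in that range, and is a cleaner way to organize the converses than the paper's direct appeal to the Apolarity Lemma, which quietly runs into the same rank-one degeneracy.
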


\begin{proof}
    If $\operatorname{WR}(f)=d$, then $b^2 - ac \le 0$ according to the Apolarity Lemma \ref{2}. Otherwise, if $b^2-ac > 0$ then $\Omega$ is a product of two non-proportional linear factors, thus $\operatorname{WR}(f) \le 2$ by Corollary \ref{cal}. If $\operatorname{WR}(f) \le 2$, then $\Omega$ must have real roots again due to the Apolarity Lemma which implies $b^2-ac > 0.$

    It remains to prove that if $b^2-ac \le 0,$ then $\operatorname{WR}(f)=d$. If $b^2-ac < 0,$ then $\Omega$ is equivalent to the Laplacian, so  $\operatorname{WR}(f)=d$ by Theorem \ref{8}. If $b^2-ac=0,$ then we may assume $\Omega=\partial_x^2,$ thus $f=a xy^{d-1}+by^d=(ax+by)y^{d-1}$ which is equivalent to $xy^{d-1}.$ Then by \cite{BCG, BS}, we have $\operatorname{WR}(f)=d$.
\end{proof}

\subsection{Some Examples}
We present two concrete examples of the minimal decompositions for Harmonic forms, providing a step-by-step computation based on the above algorithm.

\begin{example}
Consider the harmonic form \(f=2h_{4,0}+h_{4,1}.\)
\begin{enumerate}[label=\textbf{Step \arabic*}:, leftmargin=*]
  \item Take
  \[
  \mathfrak{g} = (\partial_x - \partial_y)(\partial_x + \partial_y).
  \]

\item  Compute
  \[
  \mathfrak{g} \circ f = 48x^2 + 48xy - 48y^2.
  \]
  
\item  Choose
\[
\psi_1 = 2\sqrt{3}(2x + y), \quad \psi_2 = 2\sqrt{15}y,
\]  
such that  
\[
48x^2 + 48xy - 48y^2 = \psi_1^2 - \psi_2^2,
\]  
and the linear forms \( x+y,\ x-y,\ 2x+y,\ y \) are pairwise non-proportional.

\item  Assume a decomposition
  \[
  f = \lambda_1(x+y)^4 + \lambda_2(x-y)^4 + \lambda_3(2x+y)^4 + \lambda_4 y^4.
  \]
  Comparing coefficients yields
  \[
  \lambda_1 = -\tfrac{5}{2}, \quad \lambda_2 = -\tfrac{5}{6}, \quad \lambda_3 = \tfrac{1}{3}, \quad \lambda_4 = 5.
  \]
  Thus, we obtain a minimal Waring decomposition of $f$:
  \[
  f = -\frac{5}{2}(x+y)^4 - \frac{5}{6}(x-y)^4 + \frac{1}{3}(2x+y)^4 + 5y^4.
  \]
\end{enumerate}
\end{example}

\begin{example}
Consider the harmonic form \( f = h_{5,0} + 2h_{5,1}. \)
\begin{enumerate}[label=\textbf{Step \arabic*}:, leftmargin=*]
  \item Take
  \[
  \mathfrak{g} = (\partial_x - \partial_y)(\partial_x + \partial_y)(2\partial_x - \partial_y).
  \]

\item  Compute
  \[
  \mathfrak{g} \circ f = 1200xy.
  \]

\item  Choose 
\[\psi_1 = 10(x + 3y), \quad \psi_2 = 10(x - 3y), \] 
such that  
 \[ 1200xy=\psi_1^2-\psi_2^2,  \] and the linear forms \( x+y,\ x-y,\ 2x - y,\ x+3y,\ x-3y \) are pairwise non-proportional.

\item  Assume a decomposition
  \[
  f = \lambda_1(x+y)^5 + \lambda_2(x-y)^5 + \lambda_3(2x - y)^5 + \lambda_4(x-3y)^5 + \lambda_5(x+3y)^5.
  \]
  Comparing coefficients yields
  \[
  \lambda_1 = \tfrac{25}{8}, \quad \lambda_2 = -\tfrac{25}{24}, \quad \lambda_3 = -\tfrac{4}{3}, \quad \lambda_4 = \tfrac{1}{24}, \quad \lambda_5 = \tfrac{5}{24}.
  \]
  Thus, we obtain a minimal Waring decomposition of $f$:
  \[
  f = \frac{25}{8}(x+y)^5 - \frac{25}{24}(x-y)^5 - \frac{4}{3}(2x-y)^5 + \frac{1}{24}(x-3y)^5 + \frac{5}{24}(x+3y)^5.
  \]
\end{enumerate}
\end{example}

\section*{Use of AI tools declaration}
The authors declare they have not used Artificial Intelligence (AI) tools in the creation of this article.

\section*{Acknowledgements}
H.-L. Huang was partially supported by the Key Program of the Natural Science Foundation of Fujian Province (Grant No. 2024J02018) and the National Natural Science Foundation of China (Grant No. 12371037).

Y. Ye was partially supported by the National Key R\&D Program of China (Grant No. 2024YFA1013802), the National Natural Science Foundation of China (Grant Nos. 12131015 and 12371042), and the Innovation Program for Quantum Science and Technology (Grant No. 2021ZD0302902).

R. Zhu was partially supported by the National Natural Science Foundation of China (Grant No. 12201223) and the Natural Science Foundation of Fujian Province (Grant Nos. 2023J05048 and 2023J01126).
\section*{Conflict of interest}
The authors declare no conflicts of interest.

\begin{spacing}{.88}
\setlength{\bibsep}{2.pt}
\bibliographystyle{abbrvnat}
\end{spacing}

\end{document}